\renewcommand{\emptyset}{\O}
\newtheorem{theorem}{Theorem}
\newtheorem{proposition}[theorem]{Proposition}
\newtheorem{lemma}[theorem]{Lemma}
\newtheorem{corollary}[theorem]{Corollary}
\newtheorem{remark}[theorem]{Remark}
\newtheorem{definition}[theorem]{Definition}
\newtheorem*{definition*}{Definition}
\newcommand{\Z}{\mathbb{Z}}
\newcommand{\R}{\mathbb{R}}
\newcommand{\Q}{\mathbb{Q}}
\numberwithin{equation}{subsection}
\newcommand{\N}{\mathbb{N}}
\newcommand{\U}{\mathcal{U}}
\begin{document}
\title{Strictly Monotone Numerosity on Tame Sets via the Steiner Polynomial}
\author{Joseph T. Previdi}
\date{January 25, 2026}
\maketitle
\begin{abstract}
This paper uses inspiration from Integral Geometry to connect Tame Geometry with Nonstandard Analysis. We omit binomial coefficients from the Steiner polynomial to define the \textit{intrinsic volume polynomial} $\Phi$, a valuation defined on bounded definable sets in an o-minimal structure. We prove that using this normalization gives a strictly monotone valuation on point sets when the codomain $\R[t]$ is interpreted with ordering by end behavior. This leads to an algebraic version of Hadwiger's Theorem: $\Phi$ is the unique conormal continuous, similarity-equivariant homomorphism of ordered rings from $\mathcal{C}(\R^\infty) \to \mathbb{R}[t]$ (up to scaling). Noting that strict monotonicity is mirrored in numerosity theory (a branch of nonstandard analysis), we prove existence for a numerosity that exceptionally approximates the intrinsic volume polynomial. This suggests a connection between disparate fields, allowing each to complement the other.  
\end{abstract}

\section{Introduction}

How do we measure the size of a point set? In classical analysis, the Lebesgue measure provides a robust answer, yet is blind to lower-dimensional features: any lower-dimensional set is collapsed to zero. This is by design: a measure must lose some geometric information to gain countable additivity.

There exist a variety of methods to capture this lost information. Measure theory uses \textit{Hausdorff measure}; o-minimal and convex geometry use \textit{intrinsic volumes} (or quermassintegrals), such as the Euler characteristic, perimeter, and volume. These distinct functionals interact via many complex formulas.  Among these are the intrinsic volume product formula and the Steiner formula (see \cite{schneider_convex_2014}), which both show that the intrinsic volumes behave like coefficients of a polynomial.

In this paper, we propose unifying these invariants into a single polynomial in the style of Steiner, but with slight renormalization. We prove that this formalism has the tangible benefits of preserving the strict partial order on point sets by inclusion. Denote the \textbf{intrinsic volume polynomial} by $\Phi(A)(t) = \sum \mu_k(A)t^k$, where $\mu_k$ is the $k$-th intrinsic volume. If we equip the ring of polynomials with an ordering by end behavior ($t\to+\infty$), the immediate consequence is that $\Phi$ strictly respects the partial order of set inclusion, in the sense of Euclid's notion that ``the whole is greater than the part." That is, if $A\subsetneq B$, then $\Phi(A)<\Phi(B)$. 

This leads us to a corollary of Hadwiger's Characterization Theorem: any polynomial-valued valuation $\Psi$ on tame sets satisfying specific symmetries and continuity must be strictly monotone. This fact can be stated algebraically: when constructible functions are viewed as an ordered ring, the intrinsic volume polynomial $\Phi$ is the unique continuous, similarity-equivariant homomorphism of ordered rings into $\mathbb{R}[t]$ (up to scaling the variable $t$). 

Since applying Hadwiger to the polynomial form gives us strict monotonicity, we note immediately a relation to the Nonstandard Analysis theory of \textit{numerosities}, introduced by Benci and Di Nasso \cite{benci_elementary_2014}. This theory uses hyperreal-valued valuations to enforce strict monotonicity by treating valuations as hyperfinite counting functions. We demonstrate that $\Phi$ can be suitably modeled as a numerosity, up to infinitesimal approximation, allowing us to extend our domain of interest to \textit{all} point sets, instead of just tame ones.  

Specifically, Section 5 proves existence of a hyperfinite sample in a nonstandard extension of Euclidean space. Such a sample allows us to extend the intrinsic volume polynomial to a numerosity function whose domain is the full power set $\mathcal{P}(\R^d)$. We show that there exists a hyperinteger $\omega$ such that the numerosity $\mathfrak{n}(A)$ is infinitely close to the value $\Phi(A)(\omega)$. 

The results proved below tighten the relationship between valuations (or measures) and infinite hyperreal (or very large finite) counting. If the intrinsic volume polynomial $\Phi$ is (up to scaling) the unique polynomial satisfying geometrical symmetries and it is realizable in a nonstandard universe, then nonstandard analysis can provide a rigorous model for intrinsic geometry. This paper shows a concrete realization of that model.

\section{O-Minimal Structure and Intrinsic Volumes}\label{O-Minimal Geometry}
We work within a fixed o-minimal structure $\mathcal{S}=(\mathcal S_n)_{n\in \mathbb{N}}$ on the real line $\mathbb{R}$, following the foundational framework described by van den Dries in 1998 \cite{van_den_dries_tame_1998}. We call a set $A$ \textit{definable} or \textit{tame} if $A\in \mathcal{S}_n$ for some $n$. The o-minimal property of the structure ensures that $\mathcal S_1$ consists exactly of all finite unions of points and intervals. For a detailed treatment of o-minimal theory, see \cite{van_den_dries_tame_1998}. 

To ensure the intrinsic volumes are finite, we restrict our attention to the subclass of bounded definable sets within the structure. We denote this by $\mathcal{U}^b = \bigcup_{n\in\mathbb{N}}\mathcal{S}_n^b$, where $\mathcal S_n^b\subset \mathcal S_n$ refers to those definable sets of $\mathbb R^d$ that are bounded. Note that $\mathcal{U}^b$ does not constitute an o-minimal structure in the sense of van den Dries; while it is closed under unions, intersections and relative complements, it is not closed under absolute complements (e.g. $\mathbb R \setminus A$ is unbounded). The structure is also closed under Cartesian products, but does not include $\mathbb R$ itself or any Cartesian products thereof. 

The intrinsic volumes $\mu_0,\mu_1,..., \mu_d$ can be extended to o-minimal geometry from their use in the Steiner formula and other areas of convex geometry. For the o-minimal context, we define them as in Definition 2.2 from Wright \cite{wright_hadwiger_2011}. With this definition, the intrinsic volumes are independent of the ambient space, definable on all tame sets, and normalized such that $\mu_d$ is the $d$-volume and $\mu_0$ is the o-minimal Euler characteristic. 

We use the intrinsic volumes as coefficients for a polynomial:

\begin{definition}[Intrinsic Volume Polynomial]
For a bounded definable set $A\in \U^b$, define:
\begin{equation}
    \Phi(A)(t) = \sum_{k=0}^{\dim(A)} \mu_k(A) t^k.
\end{equation}
\end{definition}

Note that this is identical to the Steiner polynomial, except that the intrinsic volume polynomial does not include the factor of $\binom{d}{k}$. We omit the binomial coefficients to ensure the multiplicative property in Theorem 2 holds. 

\begin{theorem}
The intrinsic volume polynomial $\Phi: \mathcal{U}^b \to \mathbb{R}[t]$ is a valuation (in the sense of the inclusion-exclusion property) and preserves the Cartesian product. That is, for any $A,B \in \mathcal{U}^b$:
\begin{enumerate}
    \item \textbf{Valuation:} $\Phi(A \cup B) = \Phi(A) + \Phi(B) - \Phi (A\cap B)$
    \item \textbf{Multiplicative:} $\Phi(A \times B) = \Phi(A) \cdot \Phi(B)$
\end{enumerate}
\end{theorem}

\begin{proof}
The additivity (1) follows immediately from the valuation property of the intrinsic volumes $\mu_k$.

For multiplicativity (2), the product theorem (Theorem 4.6 in \cite{wright_hadwiger_2011}) suffices. The intrinsic volumes of a Cartesian product satisfy the  identity:
$$ \mu_k(A \times B) = \sum_{i+j=k} \mu_i(A) \mu_j(B). $$
Substituting this into the intrinsic volume polynomial (with $\dim A= n$ and $\dim B=m$):
$$ \Phi(A\times B) = \sum_{k=0}^{n+m} \left( \sum_{i+j=k} \mu_i(A) \mu_j(B) \right) t^k. $$
This is precisely the definition of the product of the polynomials $\Phi(A)$ and $\Phi(B)$. Thus:
$$ \Phi(A\times B) = \left( \sum_{i=0}^n \mu_i(A) t^i \right) \cdot \left( \sum_{j=0}^m \mu_j(B) t^j \right) = \Phi(A) \cdot \Phi(B). $$
Since $\mathcal{U}^b$ is graded by dimension in the structure $\mathcal S = (\mathcal S_n)_{n\in \mathbb{N}}$ and $\mathbb{R}[t]$ is graded by degree, and $\Phi$ maps an element of dimension $k$ to a polynomial of degree $k$, $\Phi$ preserves the graded structure.
\end{proof}

\section{Ordering and Strict Monotonicity}
The ultimate goal of defining the intrinsic volume polynomial is to allow for symmetrical size comparisons compatible with ``the whole is greater than the part" for point sets. Consequently, we also wish to compare the magnitudes of polynomials by end behavior. We interpret the polynomial ring $\mathbb{R}[t]$ as an ordered ring equipped with the standard lexicographical ordering induced by the degree.

\begin{definition}[{Lexicographical Ordering on $\mathbb{R}[t]$}]
Let $P(t) = a_n t^n + \dots + a_0 \in \mathbb{R}[t]$ be a non-zero polynomial. Let $k = \deg(P)$ be the largest index such that $a_k \neq 0$. We say that $P(t) > 0$ if the leading coefficient $a_k > 0$. We define $P > Q$ if $P - Q > 0$.
\end{definition}

This ordering is equivalent to comparing polynomials by end behavior, or as we will do later, evaluating them at a positive infinite hyperreal $\omega$. Lexicographical order by degree reflects the intuition that a higher-dimensional measure dominates any amount of lower-dimensional measure.

We show next that our repackaging, the intrinsic volume polynomial $\Phi$, is strictly monotone ($A\subsetneq B\implies\Phi(A)<\Phi(B)$), in contrast with the Lebesgue measure. 

\begin{theorem}[Strict Monotonicity]\label{Strict Monotonicity}
The map $\Phi$ is a strictly monotone valuation on definable sets. That is, for any $A, B \in \mathcal{U}^b$ with $A \subsetneq B$:
\begin{equation}
    \Phi(A) < \Phi(B)
\end{equation}
in the lexicographical order of $\mathbb{R}[t]$.
\end{theorem}

\begin{proof}
Let $A, B \in \mathcal{U}^b$ such that $A \subsetneq B$. Applying the additivity of $\Phi$ on disjoint sets:
$$ \Phi(B) = \Phi(A) + \Phi(B \setminus A). $$
Thus, $\Phi(B) - \Phi(A) = \Phi(B \setminus A)$.
Let $e=\dim(B\setminus A)$. Since $B\setminus A$ is nonempty and definable, by the Cell Decomposition Theorem \cite{van_den_dries_tame_1998} it contains an $e$-cell and thus has positive Hausdorff $e$-measure. Hence, $\Phi(B \setminus A) > 0$, since that positive measure is its leading coefficient. Therefore, $\Phi(A) < \Phi(B)$.
\end{proof}

\section{Monotonicity and Characterization}

We now turn to the fundamental characterization of the map $\Phi$. We have shown that $\Phi$ is a strictly monotone multiplicative valuation. We now prove that $\Phi$ is the unique map satisfying the natural geometric symmetries of bounded definable sets, up to a scaling of the variable $t$. This result is an analog of Hadwiger's Characterization Theorem.

\subsection{Geometric Machinery}

First, we formalize the notion of similarity equivariance in the context of polynomial-valued valuations. This is effectively the statement that the polynomial transforms identically to underlying set under dilations and rigid motions. Note a dilation (or scaling) by a positive real scale factor $\alpha$ is defined as $\alpha A = \{\alpha x \mid x \in A\}$. We call any combination of dilations and rigid motions $g$ a ``similarity transformation," and note that $g$ forms a group with a natural group action on point sets. 

\begin{definition}[Similarity Equivariance]
Let $\Psi: \mathcal{U}^b \to \mathbb{R}[t]$ be a map. We say that $\Psi$ is \textbf{similarity equivariant} if for any definable set $A \in \mathcal{U}^b$ and any similarity transformation $g$ with scale factor $\alpha > 0$:
\begin{equation}
    \Psi(g\cdot A)(t) = \Psi(A)(\alpha t).
\end{equation}
\end{definition}

This condition ensures that the variable $t$ scales as a length, $t^2$ as an area, and so on. It thus accurately reflects scaling in real space. Note that our intrinsic volume polynomial $\Phi$ satisfies this by construction, since intrinsic volumes are homogeneous of degree $k$:
$$ \Phi(g\cdot A)(t) = \sum_{k=0}^{\dim(A)} \mu_k(g\cdot A) t^k = \sum_{k=0}^{\dim(A)} (\alpha^k \mu_k(A)) t^k = \sum_{k=0}^{\dim(A)} \mu_k(A) (\alpha t)^k = \Phi(A)(\alpha t). $$

Our next hurdle is to demonstrate that the intrinsic volume polynomial $\Phi$ is, up to a scaling of the variable, the unique map satisfying the natural symmetries of the structure. We first establish a lemma regarding the behavior of equivariant polynomial maps on the unit interval $[0,1)$.

\begin{lemma}[The Scale Factor]
Let $\Psi: \mathcal{U}^b \to \mathbb{R}[t]$ be a similarity-equivariant valuation. Then for the unit interval $I = [0,1)$, we have:
$$ \Psi(I)(t) = \alpha t $$
for some constant $\alpha \in \mathbb{R}$, which we call the \textbf{scale factor} of $\Psi$.
\end{lemma}
\begin{proof}
Set $P(t) = \Psi([0,1))$. Additivity and invariance yield $\Psi([0,2)) = 2P(t)$, while scaling requires $\Psi([0,2)) = P(2t)$. The only polynomial solution to the functional equation $P(2t) = 2P(t)$ is linear, thus $\Psi([0,1))(t) = \alpha t$.
\end{proof}

The final ingredient before we prove uniqueness of the intrinsic volume polynomial is a lemma showing that the standard Hadwiger Theorem for convex bodies also applies in the case of definable sets in $\mathcal U^b$. It is a corollary of \cite{baryshnikov_hadwigers_2013}. 

\begin{lemma}[Hadwiger's Theorem for definable sets] Let $v$ be a valuation on  $\mathcal U^b$ which is conormal continuous (see \cite{baryshnikov_hadwigers_2013}) and Euclidean-invariant. Then $v$ is a linear combination of the intrinsic volumes. 
\end{lemma}
\begin{proof}
This follows directly from the classification of valuations on constructible functions (Lemma 12 in \cite{baryshnikov_hadwigers_2013}). By identifying a definable set $S$ with its characteristic function $\mathbf{1}_S$, any continuous, Euclidean-invariant valuation $v$ on sets induces a valuation $V$ on constructible functions. Baryshnikov establishes that $V$ is an integral against the intrinsic volumes; restricting back to the characteristic function $\mathbf{1}_S$ yields $v(S) = \sum c_i \mu_i(S)$.
\end{proof}

\subsection{Monotonicity Theorem}

To end this section, we prove a corollary of Hadwiger's Theorem that shows $\Phi$ is unique under certain conditions. However, since $\Phi$ satisfies Hadwiger's conditions, this shows that reframing the intrinsic volumes as polynomials gives us strict monotonicity as a necessary consequence. 

\begin{theorem}[Classification and Monotonicity]\label{classification}Let $\Psi: \mathcal{U}^b \to \mathbb{R}[t]$ be a multiplicative, similarity-equivariant valuation (note this implies Euclidean invariance). Suppose $\Psi$ is:\begin{itemize}
    \item Conormal Continuous: $\Psi$ is continuous with respect to the flat topology (as defined in \cite{baryshnikov_hadwigers_2013}).
    \item Positive on the unit interval: The leading coefficient of $\Psi([0,1))$ is positive.\footnote{Without this condition, alternating-sign behaviors might be possible.}
\end{itemize}
Then $\Psi$ is strictly monotone with respect to the lexicographical order. Specifically, there exists a unique $\alpha > 0$ such that $\Psi(A)(t) = \Phi(A)(\alpha t)$.
\end{theorem}

\begin{proof}
First, consider the behavior on the unit interval $I = [0,1)$. By the Scale Factor Lemma, $\Psi(I)(t) = \alpha t$ for some constant $\alpha$. The positivity hypothesis ensures $\alpha > 0$.

Next, consider the coefficient projection maps defined by $\nu_k(A) = [\Psi(A)]_k$, where $[\cdot]_k$ denotes the coefficient of $t^k$. Since $\Psi$ is a continuous, invariant valuation, each component $\nu_k$ is a real-valued continuous invariant valuation. By Hadwiger's Theorem for definable sets, each $\nu_k$ is a linear combination of intrinsic volumes. 

However, the similarity-equivariance of $\Psi$ imposes a strict constraint. For any scalar $\lambda > 0$, we have $\Psi(\lambda A)(t) = \Psi(A)(\lambda t)$, which implies:
$$ \nu_k(\lambda A) = \lambda^k \nu_k(A). $$
Since the intrinsic volume $\mu_j$ is homogeneous of degree $j$, the only term in the linear expansion of $\nu_k$ that scales with degree $k$ is $\mu_k$ itself. Thus, for each $k$, we must have $\nu_k(A) = c_k \mu_k(A)$ for some constant $c_k$. Summing these components, $\Psi(A)(t) = \sum_{k} c_k \mu_k(A) t^k$.

To determine the constants $c_k$, we invoke multiplicativity on the unit hypercubes. Let $I^m = [0,1)^m$\footnote{We use half-open intervals to ensure o-minimal Euler characteristic zero. This is critical to ensure strict monotonicity}. We have:

$$ \Psi(I^m) = (\Psi(I))^m = (\alpha t)^m = \alpha^m t^m. $$

Simultaneously, evaluating the sum formula on the hypercube (where $\mu_m(I^m)=1$ and $\mu_j(I^m)=0$ for $j \neq m$) yields $\Psi(I^m) = c_m t^m$. Comparing coefficients, we find $c_m = \alpha^m$. For $m=0$, note that the empty tuple set, $\R^0=\{()\}$ has Euler characteristic $1$. Moreover, multiplicativity implies we must have $\Psi(\R^0) = 1$; thus, $c_0 = \alpha^0 = 1$.

Substituting these constants back into the sum yields:
$$ \Psi(A)(t) = \sum_{k} \alpha^k \mu_k(A) t^k = \sum_{k} \mu_k(A) (\alpha t)^k = \Phi(A)(\alpha t). $$
Since $\alpha > 0$ and $\Phi$ is strictly monotone (Theorem 3), the scaling by $\alpha$ preserves the sign of the leading coefficient. Thus, $\Psi$ is strictly monotone.
\end{proof}

The intrinsic volumes already encode much of the geometric information about tame sets; Theorem \ref{classification} illustrates that they also retain enough information about the size of sets to enforce strict monotonicity under certain symmetries. 

\subsection{Algebraic Characterization of $\Phi$} To create a natural algebraic characterization of Theorem \ref{classification}, we extend our domain from sets to functions. Let $\mathcal{C}(\R^\infty)$ be the vector space of constructible functions with compact support on $\mathbb{R}^\infty$ (the direct limit under canonical embedding). We equip $\mathcal{C}(\R^\infty)$ with a ring structure via pointwise addition and the external product multiplication: $(f\boxtimes g)(x,y)=f(x)g(y)$ (corresponding to the Cartesian product of sets). We order this ring by the standard pointwise partial order: $f \le g \iff f(x) \leq g(x)$ for all $x$ (corresponding to the subset partial ordering). Since $\Phi$ is a valuation on bounded definable sets, it extends uniquely to a linear map on $\mathcal{C}(\R^\infty)$ via integration.
\begin{definition}[Extension to Functions]For a constructible function $f \in \mathcal{C}(\R^\infty)$, we define the intrinsic volume polynomial $\Phi(f) \in \mathbb{R}[t]$ by:

$$\Phi(f)(t) := \sum_{k \geq 0} \left( \int_{\R^\infty} f \, d\mu_k \right) t^k$$

where integration is with respect to the $k$-th intrinsic volume $\mu_k$ as defined in \cite{wright_hadwiger_2011}.\end{definition} 

This extension allows us to algebraically condense the geometric properties of Theorem \ref{classification} into an algebraic reformulation of Hadwiger's Characterization Theorem as follows:
\begin{corollary}[Algebraic Characterization] The intrinsic volume polynomial $\Phi$ is, up to scaling of the variable $t$ by a positive constant, the unique conormal continuous, similarity-equivariant homomorphism of ordered rings from $\mathcal{C}(\R^\infty) \to \mathbb{R}[t]$. \end{corollary}

Recall that ``similarity-equivariant" means that for any combination $g$ of rigid motions and scalings by a positive constant $\alpha$, $\Phi(g\cdot f)(t)=\Phi(f)(\alpha t)$. 

\begin{remark}[Normalization]The ambiguity of the variable scaling corresponds to the choice of unit length. If we impose the normalization condition $\Phi(\mathbf{1}_{[0,1)}) = t$, the map is uniquely determined as the specific intrinsic volume polynomial defined above.\end{remark}

\section{Nonstandard Analysis and Numerosity}

Having established that the intrinsic volume polynomial is strictly monotone, we explore how to model it through numerosity theory, which is concerned with modeling ``whole is greater than the part" behavior of point sets. Our goal in this section is to construct a numerosity function $\mathfrak n$ that approximates $\Phi$ in the sense that $$\mathfrak{n}(A) \approx \Phi(A)(\omega),$$ where $\approx$ means infinitely close and $\omega$ is an infinite hyperinteger. 

This inquiry leads us to form a bridge between concrete geometry and the abstract model-theoretic world of Nonstandard Analysis (NSA). Numerosity theory is a branch of NSA initialized by Benci and Di Nasso in \cite{benci_elementary_2014} and several other papers on the subject. A more general treatise on NSA can be found in \cite{cutland_eightfold_2016}. 

\subsection{Comparing Numerosities to the Intrinsic Volume Polynomial}

As a notational consideration before we continue, let us consider that numerosities are defined in a fixed dimension $d$, but we are concerned with products over different dimensions. Moreover, we will be considering intrinsic valuations, for which the ambient dimension has no effect on the measurement of a set. In this vein, adopt the following convention.  

\begin{remark}[Canonical Embeddings]
For the remainder of this paper, we fix a sufficiently large ambient dimension $d$. For any $p < d$, we identify $\mathbb{R}^p$ with the subspace $\mathbb{R}^p \times \{0\}^{d-p} \subset \mathbb{R}^d$. Accordingly, any definable subset $A \subset \mathbb{R}^p$ is identified with its canonical embedding $A \times \{0\}^{d-p} \subset \mathbb{R}^d$. This allows us to treat the numerosity $\mathfrak{n}$ as acting on definable sets of varying dimensions without explicit reference to the embedding map.
\end{remark}

We take the formal definition and elementary properties of a numerosity function from \cite{benci_elementary_2014}: 
\begin{definition}[Elementary Numerosity]\label{numerosity def}
An \emph{elementary numerosity} (or just \textit{numerosity}) on a set $\Omega$ is a function
$$\mathfrak{n}:\mathcal{P}(\Omega)\to [0,+\infty)_\mathbb{F}$$
defined for all subsets of $\Omega$,
taking values in the non-negative part of a field $\mathbb{F}\supset \mathbb R$,
and such that the following two conditions are satisfied:

\smallskip
\begin{enumerate}
\item
$\mathfrak{n}(\{x\})=1$ for every point $x\in\Omega$\,;

\smallskip
\item
$\mathfrak{n}(A\cup B)=\mathfrak{n}(A)+\mathfrak{n}(B)$ whenever $A$ and $B$ are disjoint.
\end{enumerate}
\end{definition}

\begin{proposition}\label{numerosity prop}
Let $\mathfrak{n}$ be an elementary numerosity. Then:

\smallskip
\begin{enumerate}
\item
$\mathfrak{n}(A)=0$ if and only if $A=\emptyset$;

\smallskip
\item
If $A\subsetneq B$, then $\mathfrak{n}(A)<\mathfrak{n}(B)$.

\smallskip
\item
If $F$ is a finite set of cardinality $n$, then $\mathfrak{n}(F)=n$;
\end{enumerate}
\end{proposition}

Note that several (but not all) of the same properties enjoyed by $\Phi$ are held by $\mathfrak{n}$. They describe very similar structures on point sets. The major differences include: 
\begin{enumerate}
    \item The numerosity $\mathfrak{n}$ is defined on the power set of $\mathbb{R}^d$, not just definable sets. 
    \item $\mathfrak{n}$ takes values in a field containing $\mathbb R$ while $\Phi$ takes values in $\mathbb{R}[t]$, which contains $\mathbb{R}$, but is not a field (though it may be extended to one). 
    \item In contrast to $\Phi$, the numerosity $\mathfrak{n}$ is rarely constructed in a manner as to be actively computed precisely. 
    \item While some definitions of a numerosity require it to respect Cartesian products (similar to $\Phi$), not all do. We do not take this as a preliminary here. 
\end{enumerate} 

In remainder of this section, we use the methods from \cite{benci_elementary_2014} as a guide to construct a hyperfinite sample $F \subset {}^*\mathbb{R}^d$ such that counting the points of $F$ lying inside a set $A$ yields almost exactly the value of the intrinsic volume polynomial (within infinitesimal error). This implies that $\Phi(A)$ can be modeled not just as a formal polynomial, but as a true asymptotic counting function. Furthermore, $\Phi$ can thus be ``extended" (with small error) to measure all subsets of $\mathbb R^d$. 

\subsection{Integer Approximation of the Polynomial}

We first prove that for any finite collection of tame sets, there exists a finite collection of points such that the point count approximates the value of the intrinsic volume polynomial. For this count to approximate the polynomial, we need the values of that polynomial to be nearly integers. Lemma \ref{small int parts} proves approximation is always possible, and Lemma \ref{nonstandard} achieves it.

\begin{lemma}\label{small int parts}
Let $||\alpha||$ denote the distance from $\alpha\in \mathbb{R}$ to the nearest integer. Fix $0<\epsilon<1$ and $K\in \N$. For a finite collection of polynomials $\{p_i(t)\}_i$ with integer constant terms ($p_i(0)\in \mathbb{Z}$), there exist infinitely many $N\in \mathbb{N}$ such that $||p_i(N)||<\epsilon$ for all $i$ and $N\equiv 0 \mod K$.
\end{lemma}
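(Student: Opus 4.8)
The statement is really a joint-Diophantine-approximation (simultaneous Weyl equidistribution) fact about polynomials, so I would attack it through the orbit of a vector on a torus. First I would reduce to a single polynomial: given the finite family $\{p_i\}$, write each $p_i(x) = \sum_{j} c_{i,j} x^j$ with $c_{i,0} = p_i(0) \in \Z$. Since the constant terms are integers, $\|p_i(N)\|$ depends only on the fractional parts of the non-constant coefficients times powers of $N$. Collect all the data into one vector-valued polynomial $P(x) = (p_1(x), \dots, p_m(x)) \in \R^m$, and the goal becomes: there are infinitely many $N \in \N$ with $P(N)$ within $\epsilon$ (in sup-norm) of the integer lattice $\Z^m$, i.e.\ $P(N) \bmod \Z^m$ lies in the $\epsilon$-ball around $0$ in the torus $\mathbb{T}^m = \R^m/\Z^m$.

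**Main step.** I would invoke Weyl's theorem on equidistribution of polynomial sequences: if at least one coefficient $c_{i,j}$ with $j \geq 1$ is irrational, the sequence $(P(N) \bmod \Z^m)_{N}$ is equidistributed in a subtorus, and in particular visits every open neighborhood of $0$ infinitely often — because $P(0) = (p_1(0),\dots,p_m(0)) \equiv 0$, so $0$ lies in the closed subgroup that is the closure of the orbit. If all non-constant coefficients are rational, say with common denominator $q$, then $P(N) \bmod \Z^m$ is periodic in $N$ with period $q$, and $N = 0, q, 2q, \dots$ gives $P(N) \equiv P(0) \equiv 0$ exactly, so $\|p_i(N)\| = 0 < \epsilon$ for infinitely many $N$. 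The rational case is trivial; the irrational case is where the real content sits. To make the orbit-closure argument clean I would cite Weyl's criterion directly: for any integer vector $\mathbf{h} \in \Z^m \setminus \{0\}$, the exponential sum $\frac{1}{K}\sum_{N=1}^{K} e^{2\pi i \langle \mathbf{h}, P(N)\rangle}$ tends to $0$ as $K \to \infty$ whenever $\langle \mathbf{h}, P(x) \rangle$ has an irrational non-constant coefficient; grouping the characters by whether the linear combination $\langle \mathbf h, P(x)\rangle$ is rational-coefficiented or not handles the general case and yields equidistribution in the appropriate rational subtorus, whose identity element we then approach.

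**Finishing.** Once equidistribution (or periodicity) is in hand, choose the open set $U = \{ \mathbf{y} \in \mathbb{T}^m : \|y_i\| < \epsilon \text{ for all } i\}$, which is a nonempty open neighborhood of $0$; equidistribution forces $\#\{N \le K : P(N) \bmod \Z^m \in U\} \to \infty$, so infinitely many $N$ work, and for each such $N$ we have $\|p_i(N)\| < \epsilon$ for every $i$ simultaneously. A low-tech alternative that avoids citing Weyl: since $p_i(0) \in \Z$, consider the finite-difference operator and note that for suitable arithmetic progressions $N = t! \cdot s$ the high-degree terms become integers, reducing the degree inductively — but this is messier and I would only fall back on it if a self-contained argument is wanted.

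**Expected main obstacle.** The only real subtlety is the bookkeeping in the irrational case: one must be careful that it is possible that \emph{no single} $p_i$ has an irrational coefficient yet some integer combination does, or conversely, and that the orbit closure is the right subtorus containing $0$. Handling the mixed rational/irrational situation — i.e.\ correctly identifying the closed subgroup $\overline{\{P(N) \bmod \Z^m\}}$ and checking $0$ lies in it (which follows from $P(0) \equiv 0$ together with the subgroup structure) — is the step most likely to need care, though it is standard. Everything else (reduction to one torus, the periodic case, extracting infinitely many $N$ from positive density) is routine.
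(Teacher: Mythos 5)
Your approach is sound but genuinely different from the paper's. You package the whole family into a single vector polynomial $P(x)=(p_1(x),\dots,p_m(x))$, reduce to showing that the sequence $P(N)\bmod\Z^m$ visits an $\epsilon$-neighborhood of $0$ in $\mathbb{T}^m$ infinitely often, and then analyze the orbit closure via the multidimensional Weyl criterion. The paper instead proceeds by induction on the number of polynomials: for a single polynomial it splits on whether the coefficients are all rational (choose $N$ a multiple of the denominators) or not (one-dimensional Weyl); in the inductive step it splits on whether $\{p_0,\dots,p_k,1,x,x^2,\dots\}$ is $\Q$-linearly independent --- if so, every nonzero integer combination has an irrational coefficient, so the vector Weyl criterion (the paper's Lemma~\ref{Weyl}) gives joint equidistribution; if not, it writes $p_0$ as a rational combination of $p_1,\dots,p_k$ plus a rational polynomial, clears denominators by passing to the progression $q\N$, and applies the inductive hypothesis to $p_1(qx),\dots,p_k(qx)$. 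Your method is more conceptual and avoids the induction, but the orbit-closure step needs care in exactly the way you flag: $\overline{\{P(N)\bmod\Z^m\}}$ is \emph{not} in general a subgroup of $\mathbb{T}^m$ (e.g.\ $P(n)=n^2/4$ has orbit $\{0,1/4\}$), so ``$0$ lies in it by the subgroup structure'' is not quite the argument. What does work is precisely your proposed character decomposition: let $H$ be the subtorus cut out by those $\mathbf{h}\in\Z^m$ for which $\langle\mathbf{h},P(x)\rangle$ has all rational non-constant coefficients, pick $q$ clearing all those denominators; then $P(qn)\in H$ for all $n$ (using $P(0)\equiv 0$), $P(qn)$ equidistributes in $H$, and $0\in H$. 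That cleanly yields infinitely many $N=qn$ with $\|p_i(N)\|<\epsilon$ for all $i$. In exchange for that one structural lemma you avoid the paper's case analysis and its somewhat delicate bookkeeping with the rational coefficients $q_i$ in the dependent case, so the two routes trade a union-of-cosets argument for an induction.
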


\begin{proof}
We proceed by induction on $k$, the number of polynomials. For $k=1$, either $p_1$ has only rational coefficients or it has at least one irrational coefficient. If it has an irrational one, Weyl's criterion shows that the sequence $p_1(Kn)$ is equidistributed modulo 1 \cite{stein_fourier_2003}, and thus there are infinitely many $N=Kn$ for which $||p_1(N)||<\epsilon$ and $N\equiv 0 \mod K$. If $p_1$ has only rational coefficients, we can simply choose $N$ to be a multiple of the common denominator of the coefficients and $K$ to get $||p_1(N)||=0$.

Now suppose the theorem holds for some fixed $k$. Consider a list $p_0, p_1, \dots, p_k$ of length $k+1$. If the set $\{p_0(Kt), \dots, p_k(Kt), 1, t, t^2, \dots\}$ is $\mathbb{Q}$-linearly independent, then for all $h\in \mathbb{Z}^{k+1}\setminus \{0\}$, the polynomial $q(t)=\sum_{i=0}^k h_i p_i(Kt)$ has an irrational coefficient and is thus equidistributed modulo 1. By Weyl's criterion for multidimensional equidistribution, there exist infinitely many $N=Kn$ for which $||p_i(N)||<\epsilon$ for all $i=0,\dots,k$.

If, on the other hand, the collection is $\mathbb{Q}$-linearly dependent, Weyl's criterion does not apply directly. However, we can write:
$$p_0(Kt) = q_1 p_1(Kt) + \cdots + q_k p_k(Kt) + R(Kt)$$
where $q_i \in \mathbb{Q}$ and $R(t) \in \mathbb{Q}[t]$. Let $Q$ be the product of all denominators appearing in the $q_i$ and the coefficients of $R$. Then for any natural number $n$, the integer distance satisfies:
$$||p_0(QKn)|| = \left|\left| \sum_{i=1}^k q_i {p}_i(QKn) \right|\right| =\left|\left| \sum_{i=1}^k q_i \tilde{p}_i(QKn) \right|\right|,$$
where $\tilde{p}_i$ denotes $p_i$ with all rational-coefficient terms removed. The second equality holds because the difference between $p_i(QKn)$ and $\tilde{p}_i(QKn)$ is always an integer. Recall that each $p_i$ has integer constant term, so the constant term $\tilde{p}_i(0)$ must equal zero. Hence, we can apply the inductive hypothesis to the $k$ polynomials: $$\frac{1}{Q}\tilde{p}_1(Qt), \dots, \frac{1}{Q}\tilde{p}_k(Qt).$$

Choose a target error $\delta = \epsilon / (Q + \sum_{i=1}^k |q_i|Q)$. The hypothesis guarantees an $M=Kn$ such that $||\frac{1}{Q}\tilde{p}_i(QM)|| < \delta$ for all $i=1,\dots,k$. This implies that $\frac{1}{Q}\tilde{p}_i(QM)$ is within $\delta$ of some integer $J_i$. Multiplying by $Q$, we see that $\tilde{p}_i(QM)$ is within $Q\delta$ of the integer multiple $Q J_i$.

For the basis polynomials $p_i$ ($i=1\dots k$), since $\tilde{p}_i(QM)$ is close to the integer $Q J_i$, we have:
$$ ||p_i(QM)|| = ||\tilde{p}_i(QM)|| < Q\delta < \epsilon.$$

For the dependent polynomial $p_0$, we note that for $i>0$ we have $\frac{1}{Q}\tilde{p}_i(QM) =  J_i+\eta_i$ for some $|\eta_i|<\delta$. Therefore $q_i\tilde{p}_i(QM) =  q_iQJ_i+q_iQ\eta_i$. In the sum, $\sum_i q_i\tilde{p}_i(QM) =  \sum_iq_iQJ_i+q_iQ\eta_i$. Since $Q$ is a common denominator for $q_i$, it follows $q_iQJ_i$ is an integer, so is ignored by $||\cdot||$. Combining this with our earlier equation, $$||p_0(QM)||=\bigg|\bigg|\sum_{i=1}^k q_i\tilde{p}_i(QM)\bigg|\bigg| =  \bigg|\bigg| \sum_{i=1}^kq_iQ\eta_i\bigg|\bigg|\leq \sum_{i=1}^k |q_iQ\eta_i| \leq\delta \sum_{i=1}^k |q_iQ|<\epsilon.$$

Therefore, at $N=QM$, $||p_i(N)||<\epsilon$ for all $i=0,\dots,k$. Because $N=QM=QKn$, $N \equiv 0 \mod K$. By induction, the proof is complete.\end{proof}

\begin{lemma}\label{nonstandard}
Fix $\epsilon=1/m$ for integer $m>0$ and $K\in \N$. Let $A_1,\dots, A_v \in \mathcal{U}^b$, and let $x_1,\dots,x_k\in \mathbb{R}^d$ be a finite set of points. Then there exists a finite set $\lambda \subset \mathbb{R}^d$ such that $\#(\lambda\cap [0,1))\equiv 0 \mod K$,  $\{x_1,\dots,x_k\} \subseteq \lambda$ and for all $i=1,\dots,v$:
$$|\#(\lambda \cap A_i) - \Phi(A_i)(\#(\lambda\cap [0,1)))|<\epsilon.$$
\end{lemma}

\begin{proof}
Let $A = A_1 \cup \cdots \cup A_v \cup \{x_1, \dots, x_k\}$.
Define $B_1, \dots, B_u$ as the partition of $[0,1)$ generated by the intersections $A_i \cap [0,1)$.
Similarly, define $C_1, \dots, C_w$ as the partition of $A \setminus [0,1)$ generated by the sets $A_i \setminus [0,1)$. Note that $B_j, C_j \in \mathcal{U}^b$.

We apply Lemma \ref{small int parts} to the collection of intrinsic volume polynomials $\{\Phi(B_j), \Phi(C_j)\}$. Note that $\Phi(S)(0) = \mu_0(S) = \chi(S) \in \mathbb{Z}$, so the integer constant term condition is satisfied.

Let $\lambda_0' = \{x_1, \dots, x_k\} \cup \bigcup \{B_j : B_j \text{ is finite}\}$. Choose $N \in K\mathbb{N}$ sufficiently large such that:
\begin{itemize}
    \item $N > \#(\lambda_0')$,
    \item $\Phi(S)(N) > k+1$ for all non-constant polynomials in our set,
    \item $||\Phi(B_j)(N)|| < \frac{\epsilon}{2\max(u,w)}$ and $||\Phi(C_j)(N)|| < \frac{\epsilon}{2\max(u,w)}$ for all $j$.
\end{itemize}

We construct $\lambda_0$ inside the unit interval. For each infinite $B_j$, pick exactly $[\Phi(B_j)(N)]$ points in $B_j$ and add them to $\lambda_0'$. For finite $B_j$, $\Phi(B_j)(N) = \#(B_j)$, so the count is already exact.
Since $\{B_j\}$ partitions $[0,1)$, we verify the total count:
\begin{align*}
\sum_{j=1}^u \#(\lambda_0 \cap B_j) &= \sum_{j=1}^u [\Phi(B_j)(N)] \\
&= \sum_{j=1}^u (\Phi(B_j)(N) \pm ||\Phi(B_j)(N)||) \\
&= \Phi([0,1))(N) + \sum \pm \text{error} \\
&= N + \delta.
\end{align*}
Since the total error is strictly less than 1 and the sum must be an integer, the error is exactly 0. Thus, $\#(\lambda_0 \cap [0,1)) = N$.

We construct $\lambda_1$ outside the unit interval. Let $\lambda = \lambda_0 \cup \lambda_1$. For each $C_j$, choose exactly $[\Phi(C_j)(N)] - \#(C_j \cap \lambda_0)$ new points. This is possible because for infinite sets the target number is large ($>k$), and for finite sets the count matches the Euler characteristic exactly.

By construction, for any set $S$ in our partition $\{B_j, C_j\}$, we have $|\#(\lambda \cap S) - \Phi(S)(N)| < \frac{\epsilon}{2\max(u,w)}$.
Finally, since each $A_i$ is a disjoint union of some sub-collection of these partition sets, the errors add linearly. Thus:
$$|\#(\lambda \cap A_i) - \Phi(A_i)(N)| < \epsilon.$$
Since $N = \#(\lambda \cap [0,1))$ was given to be a multiple of $K$, the proof is complete.
\end{proof}

\subsection{The Existence of Numerosity}

We now use the integer approximation results to construct the hyperfinite sample $F \subset {}^*\mathbb{R}^d$. The previous lemma asserts that for any finite configuration of sets, there is a finite sample that suitably models the size of each set. In the nonstandard extension, this implies the existence of a hyperfinite sample that is suitable for \textit{all} sets simultaneously. Furthermore, this sample's intersection with the unit interval has cardinality $\omega\in {}^*\N$, and $\omega$ is \textit{hyper-divisible} as we define below. This is a consequence of us choosing the above theorems to work with multiples of a given fixed $K\in \N$. 

\begin{definition}
    We call a hyperinteger \textit{hyper-divisible} if it is divisible by every standard natural number. 
\end{definition}

\begin{theorem}\label{nonstandard theorem}
In any model of nonstandard analysis satisfying the property of $\kappa$-enlargement where $\kappa > 2^{\#(\mathbb{R}^d)}$, there exists a numerosity function $\mathfrak{n}: \mathcal{P}(\mathbb{R}^d) \to {}^*\mathbb{N}$ and a hyper-divisible hyperinteger $\omega \in {}^*\mathbb{N}$ such that for any bounded definable set $A \in \mathcal{U}^b$:
$$\mathfrak{n}(A) \approx \Phi(A)(\omega).$$
Moreover, for these sets, $\mathfrak{n}(A) = [\Phi(A)(\omega)]$ (the nearest integer to $\Phi(A)(\omega)$). Finally, if $\Phi(A)$ has only rational coefficients, then $\mathfrak{n}(A) = \Phi(A)(\omega)$ exactly. 
\end{theorem}

\begin{proof}
Let $\Lambda$ be the set of all finite subsets of $\mathbb{R}^d$. We construct a family of subsets of $\Lambda$ to apply the enlargement property.
\begin{itemize}
    \item For $x \in \mathbb{R}^d$, let $S_x = \{ \lambda \in \Lambda : x \in \lambda \}$.
    \item For $A \in \mathcal{U}^b$ and $m \in \mathbb{N}$, let $\Gamma(A,m) = \{ \lambda \in \Lambda : |\#(\lambda \cap A) - \Phi(A)(\#(\lambda \cap [0,1)))| < 1/m \}$.
    \item For $k\in \N$, let $\Delta_k = \{ \lambda \in \Lambda : \#(\lambda \cap [0,1)) \equiv 0 \pmod k \}$.
\end{itemize}
Let $\mathcal{G} = \{S_x : x \in \mathbb{R}^d\} \cup \{\Gamma(A,m) : A \in \mathcal{U}^b, m \in \mathbb{N}\}\cup\{\Delta_k:k\in \N\}$.
Lemma \ref{nonstandard} guarantees that this family $\mathcal{G}$ has the \textbf{finite intersection property}. For any finite sub-collection, we can choose $K$ to be the product of all $k\in \N$ determining the $\Delta_k$'s, then pick a sample $\lambda$ with $\#(\lambda \cap [0,1))=K$ such that it contains the points necessary for the $S_x$'s and the $\Gamma(A,m)$'s. 

By $\kappa$-enlargement, there exists a hyperfinite set $F$ in the intersection of the star-transform of all sets in $\mathcal{G}$:
$$F \in \bigcap_{G \in \mathcal{G}} {}^*G.$$
This hyperfinite sample $F$ satisfies:
\begin{enumerate}
    \item $\mathbb{R}^d \subseteq F$ (every standard point is in $F$).
    \item For every $A \in \mathcal{U}^b$ and standard $m$, $|{}^*\#(F \cap {}^*A) - \Phi(A)({}^*\#(F \cap {}^*[0,1)))| < 1/m$.
\end{enumerate}
Define $\omega := {}^*\#(F \cap {}^*[0,1))$. Since $F\in {}^*\Delta_k$ for all $k\in\N$, it follows that $\omega$ is hyper-divisible: all standard natural numbers evenly divide $\omega$.  
Define the numerosity $\mathfrak{n}(A) := {}^*\#(F \cap {}^*A)$. First, we verify that $\mathfrak{n}$ is a valid numerosity on $\mathcal{P}(\mathbb{R}^d)$. By the transfer of finite additivity of cardinality, $\mathfrak{n}$ is finitely additive. Furthermore, condition (1) ensures that for any standard point $x \in \mathbb{R}^d$, $\{x\} \subset F$, implying $\mathfrak{n}(\{x\}) = {}^*\#(\{x\}) = 1$. Thus $\mathfrak{n}$ is a numerosity. 

From property (2), the difference between $\mathfrak{n}(A)$ and $\Phi(A)(\omega)$ is smaller than any standard positive real, hence they are infinitely close.
Since $\mathfrak{n}(A)$ is a hyperinteger \footnote{Note $\Phi(A)(0)\in\Z$}, it must be the unique hyperinteger closest to $\Phi(A)(\omega)$. Since $\omega$ is hyper-divisible, when the coefficients of $\Phi(A)$ are rational, $\Phi(A)(\omega)$ is already a hyperinteger, and thus $\mathfrak{n}(A)=\Phi(A)(\omega)$. 
\end{proof}

Note that the set $F$ is specifically designed to approximate $\Phi$. It is not a standard lattice; rather, it possesses varying densities reflecting the coefficients of $\Phi$.

This proves that $\Phi$ approximates $\mathfrak{n}$ infinitely closely in general, and they are precisely equal in the rational coefficient case. As a bonus, we get to use a number $\omega$ with the strange but attractive property that is divisible by every standard integer. 

\subsection{Properties of the Numerosity}

Theorem \ref{nonstandard theorem} is profound in that it shows that a numerosity can be chosen such that it mirrors the intriguing characteristics of $\Phi$, at least on suitably tame sets. But the numerosity $\mathfrak{n}$ is defined on \textit{all} subsets of $\mathbb{R}^d$. It is therefore both well-behaved enough to be explicitly computable for tame sets and general enough to have a definite value even on arbitrary pathological sets. 

As an example of using the computation, the following corollary shows that $\mathfrak{n}$, when suitably normalized, recovers standard Hausdorff measure information. Specifically, it can recover the dimensionality and  measure of a set.

\begin{corollary}
For tame sets $A\in \mathcal{U}^b$, the normalized numerosity recovers the Hausdorff measure:
$$st\left(\frac{\mathfrak{n}(A)}{\omega^{\dim(A)}}\right) = \mathcal{H}^{\dim(A)}(A).$$
\end{corollary}

\begin{proof}
Let $d = \dim(A)$. By the theorem, $\mathfrak{n}(A) \approx \Phi(A)(\omega) = \mu_d(A)\omega^d + \mu_{d-1}(A)\omega^{d-1} + \dots + \mu_0(A)$.
Dividing by $\omega^d$:
$$\frac{\mathfrak{n}(A)}{\omega^d} \approx \mu_d(A) + \frac{\mu_{d-1}(A)}{\omega} + \dots$$
Taking the standard part eliminates the lower-order terms (which are infinitesimal), leaving $\mu_d(A)$. As noted in the proof of Theorem \ref{Strict Monotonicity}, $\mu_d(A)$ corresponds to the $d$-dimensional Hausdorff measure $\mathcal{H}^d(A)$.
\end{proof}

In our final proof, we show that the numerosity inherits the nice properties of $\Phi$ when applied to bounded tame sets. As noted in Definition \ref{numerosity def} and Proposition \ref{numerosity prop}, additivity (i.e. inclusion-exclusion), extension of cardinality, and strict monotonicity are all enjoyed by the numerosity $\mathfrak{n}$ we have constructed. These properties are satisfied on the entire $\mathcal{P}(\mathbb{R}^d)$. We add that when restricted to tame sets, $\mathfrak{n}$ also satisfies: 

\begin{theorem}[Properties of the Intrinsic Volume Numerosity]
The numerosity function $\mathfrak{n}: \mathcal{P}(\mathbb{R}^d) \to {}^*\mathbb{N}$ inherits the structural symmetries of $\Phi$. Specifically:
\begin{enumerate}
\item \textbf{Euclidean Invariance:} If $\phi$ is a rigid motion, then $\mathfrak{n}(A) = \mathfrak{n}(\phi(A))$ for definable $A$. 
\item \textbf{Rational Multiplicativity:} If $\Phi(A)$ and $\Phi(B)$ have rational coefficients and $A\times B\subset \R^d$, then $$\mathfrak{n}(A\times B)=\mathfrak{n}(A)\cdot \mathfrak{n}(B).$$ 
\item \textbf{General Near-Multiplicativity:} $$\frac{\mathfrak{n}(A \times B)}{\mathfrak{n}(A) \cdot \mathfrak{n}(B)}\approx 1$$ for any nonempty definable $A$ and $B$ such that $A\times B\subset \R^d$. Note that this does not imply they are infinitely close; we leave open the possibility that $\mathfrak{n}(A\times B)\not\approx\mathfrak{n}(A)\cdot \mathfrak{n}(B)$ for $\Phi(A)\not\in\Q[t]$. 
\end{enumerate}
\end{theorem}

\begin{proof}
    Let $A$ and $B$ be definable subsets of $\mathbb R^d$ and let $\phi$ be a rigid motion. We have $$\mathfrak{n}(A)=[\Phi(A)(\omega)]=[\Phi(\phi(A))(\omega)]=\mathfrak{n}(\phi(A)).$$ Thus Euclidean Invariance (1) is proven. 
    
    Rational Multiplicativity (2) follows immediately  from the multiplicativity of $\Phi$ and the fact that $\Phi(A)(\omega)=\mathfrak n(A)$ for $\Phi(A)\in \mathbb{Q}[t]$.
    
    For Near-Multiplicativity (3), notice that by our construction of $\omega$, $\Phi(A)(\omega)$ is infinitely close to a hyperinteger, as is $\Phi(B)(\omega)$. Letting $\Phi(A)(\omega)=k+\epsilon$ and $\Phi(B)(\omega)=m+\delta$, where $k,m\in {}^*\mathbb N$ and $\epsilon$ and $\delta$ are infinitesimal, we have \begin{align*}
        \mathfrak{n}(A\times B)&\approx \Phi(A\times B)(\omega)\\
        &=\Phi(A)(\omega)\cdot \Phi(B)(\omega)\\
        &=(k+\epsilon)(m+\delta)\\
        &=km+k\delta+m\epsilon+\epsilon\delta\\
        &\approx km +k\delta+m\epsilon
    \end{align*}
    Therefore, $\mathfrak{n}(A\times B)\approx km +k\delta+m\epsilon.$ Divide both sides by $\mathfrak n (A)\cdot \mathfrak{n}(B)=km\geq 1$ to get $$\frac{\mathfrak{n}(A\times B)}{\mathfrak n (A)\cdot \mathfrak{n}(B)}\approx \frac{km +k\delta+m\epsilon}{km}=1+\frac\delta m+\frac \epsilon k\approx 1.$$
\end{proof}

\section{AI Declaration}

Declaration of generative AI and AI-assisted technologies in the manuscript preparation process: During the preparation of this work the author used Gemini AI in order to aid writing style and assist in proofs. After using this tool, the author reviewed and edited the content as needed and takes full responsibility for the content of the published article.

\bibliographystyle{unsrt}
\bibliography{citations}

\end{document}